\newtheorem{theorem}{Theorem}[section]
\newtheorem{proposition}[theorem]{Proposition}
\newtheorem{lemma}[theorem]{Lemma}
\theoremstyle{definition}
\def\F{\mathcal{F} }
\def\nbd{neighborhood }
\begin{document}

\title[$R$-closedness and Upper semicontinuity]
{$R$-closedness and Upper semicontinuity}
\author{Tomoo Yokoyama}
\date{\today}
\email{yokoyama@math.sci.hokudai.ac.jp}

\thanks{The author is partially supported 
by the JST CREST Program at Department of Mathematics,  
Hokkaido University.}

\maketitle

\begin{abstract} 
Let $\mathcal{F} $ be a pointwise almost periodic decomposition 
of a compact metrizable space $X$.  
Then we show that 
$\mathcal{F} $ is $R$-closed  
if and only if 
$\hat{\mathcal{F}} $ is usc.  
On the other hand, 
let $G$ be a flow on a compact metrizable space 
and $H$ a finite index normal subgroup.
Then 
$G$ is $R$-closed  
if and only if so is $H$.   
Moreover,  
if there is a finite index normal subgroup $H$ of 
an $R$-closed flow $G$ on a compact manifold 
such that the orbit closures of $H$ consist of 
codimension $k$ compact connected submanifolds and  
``few singularities'' for $k = 1$ or $2$,  
then the orbit class space of $G$ 
is a compact $k$-dimensional manifold with conners.  
In addition, 
let $v$ be a nontrivial $R$-closed vector field on a connected compact $3$-manifold $M$. 
Then one of the following holds: 
1)
The orbit class space $M/ \hat{v}$ is $[0,1]$ or $S^1$ and 
each interior point of $M/ \hat{v}$  is two dimensional. 
2) 
$\mathrm{Per}(v)$ is open dense 
and 
$M = \mathrm{Sing}(v) \sqcup \mathrm{Per}(v)$. 
3)
There is a nontrivial non-toral minimal set. 
\end{abstract}

\section{Preliminaries}
In \cite{ES}, 
they have show that 
if a continuous mapping $f$ of a topological space $X$ 
in itself 
is either pointwise recurrent or 
pointwise almost periodic   
then so is $f^k$ for each positive integer $k$. 
This results is extended into flow cases  
(see Theorem 2.24, 4.04, and 7.04 \cite{GH}). 
In \cite{Y3}, 
one have shown 
the analogous result for $R$-closed homeomorphisms.  
In this paper, 
we extend into the $R$-closed flow cases. 

The leaf space of a compact continuous codimension two foliation $\mathcal{F} $ 
of a compact manifold $M$ 
is a compact orbifold 
\cite{E}, 
\cite{EMS}, 
\cite{E2}, 
\cite{V}, 
\cite{V2}.  
%
On the other hand, there are 
non-$R$-closed compact foliations and 
non $R$-closed flows each of whose orbits is compact 
for codimension $q > 2$ 
\cite{S}, 
\cite{EV},  
\cite{V3}. 
In \cite{Y2}, 
the author has shown that 
the set of $R$-closed decompositions on compact manifolds 
contains properly the 
set of codimension one or two foliations which is minimal or compact. 
In this paper, 
for $k = 1$ or $2$,  
we show that 
the quotient space of a codimension $k$ compact connected decomposition  
with ``few singularities'' defined by a flow   
is a compact $k$-dimensional manifold.  
In addition, 
let $v$ a nontrivial $R$-closed vector field on a connected compact $3$-manifold. 
Then one of the following holds: 
1)
The orbit class space $M/ \hat{v}$ is $[0,1]$ or $S^1$ and 
each interior point of $M/ \hat{v}$  is two dimensional. 
2) 
$\mathrm{Per}(v)$ is open dense 
and 
$M = \mathrm{Sing}(v) \sqcup \mathrm{Per}(v)$. 
3)
There is a nontrivial non-toral minimal set. 

By a decomposition, 
we mean a family $\mathcal{F} $ of pairwise disjoint nonempty subsets of a set $X$ 
such that $X = \sqcup \mathcal{F}$. 
Let $\mathcal{F} $ be a decomposition of a topological space $X$. 
For any $x \in X$, 
denote by $L_x$ or $\F(x)$ the element of $\mathcal{F} $ containing $x$. 
For a subset $A \subseteq X$, 
$A$ is saturated if 
$A = \sqcup_{x \in A} L_x$. 
$\mathcal{F} $ 
is upper semicontinuous (usc) if 
each element of $\mathcal{F} $ is both closed and compact 
and, 
for any $L \in \mathcal{F} $ and for any open neighbourhood $U$ of $L$, 
there is a saturated neighbourhood of $L$ contained in $U$. 
Note that we can choose 
the above $U$ open. 
$\mathcal{F} $ is $R$-closed if 
$R := \{ (x, y) \mid y \in \overline{L_x} \}$ is closed. 
%
%
$\mathcal{F} $ is pointwise almost periodic if 
the set of all closures of elements of $\mathcal{F} $ also is a decomposition. 
Then denote by $\hat{\mathcal{F}} $ the decomposition of closures 
and by $M/\hat{\mathcal{F}}$ the quotient space, called the orbit class space. 
By a flow, we mean a continuous action of a topological group $G$ on 
a topological space $X$. 
For a flow $G$, 
denote by $\mathcal{F} _G$ the set of orbits of $G$. 
Recall 
a flow $G$ is $R$-closed if 
the set $\mathcal{F} _G$ of orbits is an $R$-closed decomposition. 
Then $G$ is $R$-closed if and only if 
 $R := \{ (x, y) \mid y \in \overline{G(x)} \}$ is closed. 
Recall that 
each $R$-closed decomposition is pointwise almost periodic. 
For an $R$-closed vector field $v$, 
write $M/\hat{v} :=M/\hat{\mathcal{F}}_{v}$.  

\section{$R$-closedness and Upper semi continuity}

Now we show the following key lemma. 

\begin{lemma}\label{lem:001}
Let $\mathcal{F} $ be a decomposition of a Hausdorff space $X$.  
If 
$\mathcal{F} $ is pointwise almost periodic
and 
$\hat{\mathcal{F}} $ is usc, 
then $\mathcal{F} $ is $R$-closed. 
If $X$ is compact metrizable and $\mathcal{F} $ is $R$-closed, 
then 
$\mathcal{F} $ is pointwise almost periodic 
and 
$\hat{\mathcal{F}} $ is usc. 
\end{lemma}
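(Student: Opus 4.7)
The plan is to treat the two implications separately; both turn on combining the closedness of $R$ (or upper semicontinuity of $\Fh$) with the decomposition axiom that every $w \in L_y$ has $L_w = L_y$ and hence $\overline{L_w} = \overline{L_y}$.

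For the forward implication (pointwise almost periodic plus $\Fh$ usc $\Rightarrow$ $\F$ is $R$-closed), take a net $(x_\alpha, y_\alpha) \to (x, y)$ in $R$ and aim to show $y \in \overline{L_x}$. If not, Hausdorffness together with compactness of $\overline{L_x}$ (provided by the usc hypothesis) separates $\overline{L_x}$ and $y$ by disjoint open sets $U \supseteq \overline{L_x}$ and $V \ni y$. The usc property supplies a saturated open neighborhood $W \subseteq U$ of $\overline{L_x}$; pointwise almost periodicity makes $W$ a union of closures $\overline{L_z}$. Once $x_\alpha \in W$ eventually, the whole class $\overline{L_{x_\alpha}}$ lies in $W$, whence $y_\alpha \in W \subseteq U$, contradicting $y_\alpha \in V$ eventually.

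For the converse, the main task is to promote $R$ to an equivalence relation, after which both conclusions follow cleanly. Symmetry: given $(y, z) \in R$, choose $z_n \in L_y$ with $z_n \to z$; since $L_{z_n} = L_y \ni y$, one has $(z_n, y) \in R$, and closedness of $R$ gives $(z, y) \in R$. Transitivity: given $(x, y), (y, z) \in R$, use symmetry to obtain sequences $x_n, z_m \in L_y$ with $x_n \to x$ and $z_m \to z$; because $x_n$ and $z_m$ share the class $L_y$, $(x_n, z_m) \in R$, and two successive applications of closedness produce $(x, z) \in R$. With $R$ an equivalence relation, the classes $R[x] = \overline{L_x}$ partition $X$, so $\F$ is pointwise almost periodic and $\Fh = \{R[x] : x \in X\}$. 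To get usc, fix $L = R[x] \subseteq U$ with $U$ open; compactness of $X$ makes the projection $p_2 \colon X \times X \to X$ closed, so $R[U^c] = p_2(R \cap (U^c \times X))$ is a closed saturated set disjoint from $L$ (a common point would force some $y \in U^c$ to satisfy $R[y] = R[x] \subseteq U$), and $V := X \setminus R[U^c]$ is the desired saturated open neighborhood.

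The principal obstacle is the symmetry/transitivity of $R$ in the converse: closedness alone does not give it, but the decomposition axiom (all points of $L_y$ have the same closure) combined with sequential approximation inside $\overline{L_y}$ bridges the gap, and this is precisely where compact metrizability enters, so that the closure points can be reached by honest sequences.
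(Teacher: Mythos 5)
Your proof is correct, but it follows a genuinely different route from the paper's. The paper's proof is essentially citation-based: for the forward direction it invokes Proposition 1.2.1 of Daverman's book (a usc decomposition into compacta of a Hausdorff space has Hausdorff quotient) followed by Lemma 2.3 of \cite{Y} (pointwise almost periodic with Hausdorff class space implies $R$-closed); for the converse it invokes Lemma 1.6 of \cite{Y} ($R$-closedness on a compact metrizable space gives pointwise almost periodicity and closedness of the quotient map $p\colon X \to X/\hat{\mathcal{F}}$) followed by Proposition 1.1.1 of Daverman (a decomposition into compacta with closed quotient map is usc). You instead prove everything from scratch: the forward direction by a net argument separating the compact class $\overline{L_x}$ from $y$ and trapping the tail of $\overline{L_{x_\alpha}}$ inside a saturated neighbourhood, and the converse by first upgrading $R$ to an equivalence relation via sequential approximation (which is exactly where metrizability enters, and which simultaneously yields pointwise almost periodicity), then manufacturing the saturated open neighbourhood as the complement of $p_2\bigl(R \cap (U^c \times X)\bigr)$, using that projection along a compact factor is a closed map. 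Your argument passes through the same underlying facts that the cited results encapsulate — Hausdorff-type separation of classes and closedness of the saturation of closed sets — but it has the virtue of being self-contained and of making visible precisely where each hypothesis (Hausdorff, compact, metrizable) is used, at the cost of length; the paper's version is shorter and situates the lemma within the standard decomposition-theory literature. One trivial point you leave implicit: for usc of $\hat{\mathcal{F}}$ you should note that each class $\overline{L_x}$, being a closed subset of the compact space $X$, is compact, since the paper's definition of usc requires the elements themselves to be closed and compact.
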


\begin{proof}
Suppose that 
$\mathcal{F} $ is pointwise almost periodic 
and 
that 
$\hat{\mathcal{F}} $ is usc.  
By Proposition 1.2.1 \cite{D} (p.13), 
we have that 
$X/\mathcal{F} $ is Hausdorff. 
By Lemma 2.3 \cite{Y}, 
we obtain that 
$\mathcal{F} $ is $R$-closed. 
Conversely, 
suppose that 
$X$ is compact metrizable and 
$\mathcal{F} $ is $R$-closed. 
By Lemma 1.6 \cite{Y}, 
we have that 
$\mathcal{F} $ is pointwise almost periodic and 
that 
the quotient map $p: X \to X/\hat{\mathcal{F}} $ is closed. 
Since $X$ is compact Hausdorff, 
we obtain that 
each element of $\hat{\mathcal{F}} $ is compact. 
By Proposition 1.1.1 \cite{D}(p.8), 
we have that 
$\mathcal{F} $ is usc.
 \end{proof}

Lemma 2.3 \cite{Y} implies the following result. 
 
\begin{proposition}\label{prop:002}
Let $\mathcal{F} $ be a pointwise almost periodic decomposition 
of a compact metrizable space $X$.  
The following are equivalent: 
\\
1) 
$\mathcal{F} $ is $R$-closed.  
\\
2) 
$\hat{\mathcal{F}} $ is usc.  
\\
3) 
$X/\hat{\mathcal{F}} $ is Hausdorff.  
\end{proposition}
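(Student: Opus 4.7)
The plan is to close a short cycle of implications, relying on Lemma~\ref{lem:001} and the two external results that already appear in its proof. Note that Lemma~\ref{lem:001} already yields (1)$\Leftrightarrow$(2) outright: the forward direction is the second half of the lemma (using compact metrizability), and the reverse direction is the first half (which needs only Hausdorffness). So the only genuinely new content is wiring condition (3) into the chain, and I would do this by establishing (2)$\Rightarrow$(3)$\Rightarrow$(1).

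For (2)$\Rightarrow$(3): since $X$ is compact metrizable, hence regular, and $\hat{\F}$ is by assumption usc (with compact, and therefore closed, elements), Proposition~1.2.1 of \cite{D} delivers that the quotient $X/\hat{\F}$ is Hausdorff. This is exactly the step already invoked in the first paragraph of the proof of Lemma~\ref{lem:001}, so I would simply cite it.

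For (3)$\Rightarrow$(1): this is the content of Lemma~2.3 of \cite{Y}, which is signalled by the preamble sentence ``Lemma~2.3~\cite{Y} implies the following result''. The observation to keep in mind is that, once $\mathcal{F}$ is assumed pointwise almost periodic, the relation $R=\{(x,y)\mid y\in\overline{L_x}\}$ is exactly the equivalence relation whose equivalence classes form $\hat{\F}$: indeed $\overline{L_x}\in\hat{\F}$, so $y\in\overline{L_x}$ is equivalent to $\overline{L_x}=\overline{L_y}$. Hence closedness of $R$ in $X\times X$ and Hausdorffness of $X/\hat{\F}$ are, up to this identification, the same statement in the compact Hausdorff setting, which is precisely what Lemma~2.3~\cite{Y} records.

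I do not expect any serious obstacle: all the heavy lifting has been done in Lemma~\ref{lem:001} together with the two black-box results, Proposition~1.2.1~\cite{D} and Lemma~2.3~\cite{Y}. The only mild care is to ensure the hypotheses of those black boxes match the setting here: compact metrizable gives regularity for the former, and pointwise almost periodicity (built into the proposition's hypothesis) lets the latter speak about $R$ via $\hat{\F}$.
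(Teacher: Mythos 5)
Your proposal is correct and follows essentially the same route as the paper: the paper's (implicit) proof is exactly Lemma~\ref{lem:001} for (1)$\Leftrightarrow$(2), with condition (3) wired in by the same two citations used inside that lemma's proof --- Proposition~1.2.1 of \cite{D} for usc~$\Rightarrow$~Hausdorff quotient, and Lemma~2.3 of \cite{Y} for Hausdorff quotient~$\Rightarrow$~$R$-closed under pointwise almost periodicity. Your cycle (1)$\Rightarrow$(2)$\Rightarrow$(3)$\Rightarrow$(1) is precisely this argument made explicit.
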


\section{Inherited properties of $R$-closed flows}

Recall that 
a subgroup $H$ of a topological group $G$ is 
syndetic if there is a compact subset of $G$ 
such that $K \cdot H = G$. 

\begin{lemma}\label{lem31}
Let $G$ be a flow on a topological space $X$ and 
$H$ a syndetic subgroup of $G$. 
If $H$ is $R$-closed, 
then so is $G$. 
Moreover  
$\overline{G \cdot x} = K \cdot \overline{H \cdot x}$ 
for any $x \in Y$ where 
$K$ is a compact subgroup with $K \cdot H = G$. 
\end{lemma}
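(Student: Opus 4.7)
The plan is to establish the ``Moreover'' clause first; the $R$-closedness of $G$ will then fall out with little additional work. Because $K \cdot H = G$, one has the set-level equality $G \cdot x = K \cdot (H \cdot x)$ for every $x$, so the desired identity $\overline{G \cdot x} = K \cdot \overline{H \cdot x}$ reduces to proving $\overline{K \cdot (H \cdot x)} = K \cdot \overline{H \cdot x}$. I would formulate this as the general fact that $\overline{K \cdot C} = K \cdot \overline{C}$ whenever $K \subseteq G$ is compact and $C \subseteq X$ is an arbitrary subset, then apply it with $C = H \cdot x$.

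The easy inclusion $K \cdot \overline{C} \subseteq \overline{K \cdot C}$ uses only continuity of each individual $k \in K$: if $c_\alpha \to c$ with $c_\alpha \in C$, then $k c_\alpha \to k c$ and $k c_\alpha \in K \cdot C$. The reverse is the substantive step. Given $z \in \overline{K \cdot C}$ and a net $k_\alpha c_\alpha \to z$, I would use compactness of $K$ to refine to a subnet along which $k_\alpha \to k \in K$; joint continuity of the action, combined with continuity of inversion in the topological group $G$, then yields $c_\alpha = k_\alpha^{-1}(k_\alpha c_\alpha) \to k^{-1} z$. Hence $k^{-1} z \in \overline{C}$ and $z \in k \overline{C} \subseteq K \overline{C}$, proving equality.

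With the closure formula in hand, $R$-closedness of $G$ follows from a short net argument. For any convergent net $(x_\alpha, y_\alpha) \to (x, y)$ with $y_\alpha \in \overline{G \cdot x_\alpha}$, I would write $y_\alpha = k_\alpha z_\alpha$ with $k_\alpha \in K$ and $z_\alpha \in \overline{H \cdot x_\alpha}$, pass to a subnet along which $k_\alpha \to k \in K$, and observe $z_\alpha = k_\alpha^{-1} y_\alpha \to k^{-1} y$. Since each $(x_\alpha, z_\alpha)$ lies in the relation $R_H := \{(a,b) : b \in \overline{H \cdot a}\}$, which is closed by hypothesis, the limit $(x, k^{-1} y)$ lies in $R_H$; therefore $k^{-1} y \in \overline{H \cdot x}$, so $y \in K \cdot \overline{H \cdot x} = \overline{G \cdot x}$ and $(x,y) \in R_G$. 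The only real obstacle is the $\supseteq$ half of the closure formula, which needs both compactness of $K$ (to extract a convergent subnet of $k_\alpha$) and continuity of inversion in $G$ (to transfer that convergence back onto $c_\alpha$); both are guaranteed by the hypotheses, so the remainder of the argument is routine.
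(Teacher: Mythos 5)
Your proof is correct, and it reaches both conclusions by a route that differs from the paper's in its key technical ingredients. You prove the commutation identity $\overline{K \cdot C} = K \cdot \overline{C}$ for \emph{arbitrary} $C$ via nets: compactness of $K$ to extract a convergent subnet of the $k_\alpha$, then continuity of inversion together with joint continuity of the action to drag the convergence back onto the $c_\alpha$. The paper instead proves the formally weaker claim that $K \cdot C$ is \emph{closed} whenever $C$ is closed, using the tube theorem applied to $K^{-1} \times \{x\} \subseteq \pi^{-1}(Y - C)$ --- a purely open-set argument with no nets --- and then recovers the ``Moreover'' clause by a sandwich: $K \cdot \overline{H \cdot x} \subseteq \overline{G \cdot x} = \overline{K \cdot (H \cdot x)} \subseteq \overline{K \cdot \overline{H \cdot x}} = K \cdot \overline{H \cdot x}$. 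The two key facts are essentially equivalent (yours follows from the paper's by the same sandwich, and conversely), so the real divergence is in the last step: the paper disposes of $R$-closedness in one stroke by applying its claim to the auxiliary action of $G$ on $X \times X$ given by $g \cdot (x,y) = (x, g^{-1} \cdot y)$, under which $R_G = K \cdot R_H$ is closed; you instead rerun the net/subnet extraction by hand on a net in $R_G$. Your version is self-contained and makes the role of inversion and joint continuity explicit, while the paper's product-action trick avoids duplicating the limit argument and keeps the whole proof free of nets; both work in full generality for a topological group acting on an arbitrary topological space.
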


\begin{proof} 
For any flow $\pi: G \times Y \to Y$ on a topological space $Y$, 
we claim that 
$K \cdot C$ is closed for a closed subset $C$ of $Y$. 
Indeed, 
fix a point $x \in Y - K \cdot C$. 
Then $Y - C$ is an open \nbd of $K^{-1} \cdot x$ and so 
$\pi^{-1}(Y - C)$ is an open \nbd of $K^{-1} \times \{ x \}$. 
Since $K^{-1} \times \{ x \}$ is compact, 
by the tube theorem, 
there is an open \nbd $U$ of $x$ 
such that 
$K^{-1} \times U \subseteq \pi^{-1}(Y - C)$. 
Then 
$K^{-1} \cdot U \subseteq Y - C$. 
Therefore 
$(K^{-1} \cdot U) \cap C = \emptyset$ 
and so 
$U \cap (K \cdot C) = \emptyset$. 
This shows that $K \cdot C$ is closed. 
Let 
$R_G := \{ (x, y) \mid y \in \overline{G \cdot x} \}$ 
and 
$R_H := \{ (x, y) \mid y \in \overline{H \cdot x} \}$. 
Suppose that 
$H$ is $R$-closed.  
Then 
$ 
{K \cdot \overline{H \cdot x}} \subseteq 
\overline{G \cdot x} = 
\overline{K \cdot (H \cdot x)} \subseteq  
\overline{K \cdot \overline{H \cdot x}}$. 
By the above claim, 
we have 
$
\overline{G \cdot x} =
\overline{(K \cdot H) \cdot x} = 
\overline{K \cdot \overline{H \cdot x}} = 
{K \cdot \overline{H \cdot x}}$.  
Consider an action of $G$ on $X \times X$ by $g \cdot (x, y) := (x , g^{-1} \cdot y)$.  
Since 
$K$ is compact and $R_H$ is closed,  
the above claim implies that 
$R_G = K \cdot R_H$ is closed.   
\end{proof}

For a subset $V$, write $\hat{\F}(V) := \mathrm{Sat}_{\hat{\F}}(V) = 
\cup_{x \in V} \hat{\F}(x)$. 
We generalize Lemma 1.1 \cite{Y3} to flows. 
This statement is an analogous result for recurrence and pointwise almost periodicity 
(see Theorem 2.24, 4.04, and 7.04 \cite{GH}). 

\begin{lemma}\label{prop21}
Let $G$ a flow on a 
compact metrizable space $X$ 
and 
$H$ a  finite index normal subgroup. 
Then 
$G$ is $R$-closed  
if and only if so is $H$. 
\end{lemma}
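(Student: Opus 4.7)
The plan is to prove the two implications of the biconditional separately. For the reverse direction ($H$ being $R$-closed implies $G$ being $R$-closed), a finite-index subgroup is automatically syndetic---the finite (hence compact) set of coset representatives $K = \{g_1, \ldots, g_n\}$ satisfies $K \cdot H = G$---so Lemma~\ref{lem31} applies directly.

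For the forward direction, since $X$ is compact metrizable I plan to apply Proposition~\ref{prop:002}, reducing the task to establishing (a) $\mathcal{F}_H$ is pointwise almost periodic and (b) $\hat{\mathcal{F}}_H$ is usc. For (a), given $z \in \overline{H \cdot x}$, the inclusion $\overline{H \cdot z} \subseteq \overline{H \cdot x}$ is automatic. For the reverse inclusion I would use pointwise almost periodicity of $G$ (granted by its $R$-closedness) to get $\overline{G \cdot z} = \overline{G \cdot x}$ and decompose this, via normality of $H$, as $\bigcup_{i=1}^n \overline{H \cdot g_i z}$. Choosing $i$ with $x \in \overline{H \cdot g_i z}$ yields $\overline{H \cdot x} \subseteq \overline{H \cdot g_i z} \subseteq \overline{H \cdot g_i x}$; iterating by applying $g_i$ produces the chain $\overline{H \cdot g_i^k x} \subseteq \overline{H \cdot g_i^{k+1} x}$, which closes back on itself at step $[G:H]$ since $g_i^{[G:H]} \in H$. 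This forces every inclusion to be an equality, giving $g_i \overline{H \cdot x} = \overline{H \cdot x}$, and hence $\overline{H \cdot z} = g_i^{-1}\overline{H \cdot x} = \overline{H \cdot x}$.

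For (b), I plan to show $R_H$ is closed directly. Suppose $(x_n, y_n) \to (x, y)$ with $y_n \in \overline{H \cdot x_n}$; closedness of $R_G$ gives $y \in \overline{G \cdot x}$, hence $y \in \overline{H \cdot g_j x}$ for some $j$, so by (a) one has $\overline{H \cdot y} = g_j \overline{H \cdot x}$. The target is to show $g_j$ stabilizes $\overline{H \cdot x}$, yielding $y \in \overline{H \cdot x}$. The plan is to pass to a subsequence along which $\overline{H \cdot x_n}$ converges in the Hausdorff metric to some closed set $A \subseteq \overline{G \cdot x}$; routine arguments show $A$ is $H$-invariant, hence a union of pieces from the finite partition $\overline{G \cdot x} = \bigsqcup_i \overline{H \cdot g_i x}$, and contains both $x$ and $y$. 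The main obstacle is to rule out that $A$ properly contains $\overline{H \cdot x}$---that is, the ``splitting'' of orbit closures in the Hausdorff limit. I expect to address this by exploiting two features: the Hausdorff limits of the translates $\overline{H \cdot g_i x_n}$ are precisely the corresponding $g_i A$, and these must reassemble into $\overline{G \cdot x}$; and a stabilizer-chain argument parallel to (a) applied to pairs of pieces inside $A$, combined with $K = G/H$-transitivity on the pieces of $\overline{G \cdot x}$. Once $A = \overline{H \cdot x}$ is established, $y \in A = \overline{H \cdot x}$ follows, so $R_H$ is closed and Proposition~\ref{prop:002} concludes.
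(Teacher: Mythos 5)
Your reverse direction is exactly the paper's: a finite-index subgroup is syndetic via the finite set of coset representatives, so Lemma~\ref{lem31} applies. Your step (a) is also sound; it is essentially the Erd\"os--Stone chain argument, which the paper obtains simply by citing Theorem 2.24 of \cite{GH}. The problem is step (b), and it sits precisely at the point you yourself flag as ``the main obstacle'': ruling out that the Hausdorff limit $A$ of $\overline{H \cdot x_n}$ properly contains $P_1 := \overline{H \cdot x}$. This no-splitting claim is not a technical detail to be finessed at the end --- it \emph{is} the content of the lemma --- and neither of the two features you propose can establish it. The reassembly condition $\bigcup_i g_i A = \overline{G \cdot x}$ is satisfied by \emph{every} $H$-invariant union of pieces containing $P_1$, because $\bigcup_i g_i P_1 = \overline{G \cdot x}$ already; so it imposes no constraint at all on extra pieces in $A$. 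As for a ``stabilizer-chain argument parallel to (a)'': the chain in (a) works because one has an inclusion $\overline{H\cdot x} \subseteq g_i\overline{H\cdot x}$ which can be iterated and forced to close up by the finite order of $g_iH$ in $G/H$; but two distinct pieces $P_1, P_j \subseteq A$ are \emph{disjoint}, so there is no inclusion to iterate, and $G/H$-transitivity on the pieces holds regardless of what $A$ is. Concretely, if $[G:H]=2$ with pieces $P_1$ and $P_2 = gP_1$, the bad candidate $A = P_1 \sqcup P_2$ satisfies every condition in your list ($H$-invariant, closed, a union of pieces, contains $x$ and $y$, $A \cup gA = \overline{G\cdot x}$), so no argument built only from those conditions can exclude it.

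What is missing is a genuinely topological input, and this is where the paper's proof does its real work: it never takes limits of individual orbit closures, but instead controls the $H$-saturation of a whole \emph{neighborhood}. Using usc-ness of $\hat{\F}_G$ (via Proposition~\ref{prop:002}), it takes an $\hat{\F}_G$-saturated neighbourhood of $\hat{\F}_G(x)$ lying inside the disjoint union of a small ball around $\hat{\F}_H(x)$ and a small ball around the remaining $H$-classes, then runs a finite connected-component argument: the union $W$ of the components on the $\hat{\F}_H(x)$ side satisfies $H(W) = W$, because each such component meets the $H$-invariant set $\hat{\F}_H(x)$ and hence cannot be pushed by $H$ across the separation. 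This yields $\hat{\F}_H(W) \subseteq B_{\varepsilon}(\hat{\F}_H(x))$, i.e.\ usc-ness of $\hat{\F}_H$, which is exactly the no-splitting statement you need. Some mechanism of this kind --- exploiting saturated neighbourhoods and connectedness rather than algebra of cosets --- has to replace your two ``features'' before your outline becomes a proof.
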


\begin{proof} 
By Lemma \ref{lem31}, 
the $R$-closedness of $H$ implies one of $G$. 
Conversely, 
suppose that $G$ is $R$-closed. 
Let $n$ be the index of $H$ and 
$\{ f_1, \dots , f_{n-1} \}$ a subset of $G$ 
such that 
$G = H \sqcup H f_1  \sqcup \cdots \sqcup H f_{n-1}$. 
Put 
$\F := \F_{G}$. 
%
By Corollary 1.4 \cite{Y}, 
we have that 
 $G$ is pointwise almost periodic. 
%
By Theorem 2.24 \cite{GH}, 
we have that 
$H$ is also pointwise almost periodic.  
By Proposition \ref{prop:002}, 
$\hat{\F}$ is usc 
and 
it suffices to show that 
$\hat{\F}_H$ is usc. 
Note that 
$\F_H (x) \subseteq \F (x) $ and so 
$\hat{\F}_H (x)  \subseteq \hat{\F} (x) $. 
For $x \in X$ with $\hat{\F}_H (x) = \hat{\F} (x)$ and 
for any open neighbourhood 
$U $ of $\hat{\F} (x) = \hat{\F}_H (x)$, 
since $\hat{\F}$ is usc, 
there is a 
$\hat{\F}$-saturated open neighbourhood  $V$ of $\hat{\F} (x)$ 
contained in $U$. 
Since $\hat{\F}_H (x)  \subseteq \hat{\F} (x) $, 
we have that 
$V$ is also 
a ${\hat{\F}_H}$-saturated open neighbourhood  $V$ of $\hat{\F} (x)$. 
Fix any $x \in X$ with $\hat{\F} (x) \neq \hat{\F}_H (x)$. 
Put 
$\hat{L}_1 := \hat{\F}_H (x)$ and 
$\{ \hat{L}_2, \dots , \hat{L}_k \} := 
\{  \hat{\F} (f_l(x)) \mid l = 1, \dots , n-1 \}$ 
such that 
$\hat{L}_i \cap \hat{L}_j = \emptyset$
 for any $i \neq j \in \{1, \dots , k\}$. 
%
Let $\hat{L}' = \hat{L}_2 \sqcup \dots \sqcup \hat{L}_k$. 
Then 
$\hat{L}_1$ and $\hat{L}'$ are closed
and 
$\hat{\F}(x) 
= \hat{L}_1 \sqcup \cdots \sqcup \hat{L}_k 
= \hat{L}_1 \sqcup \hat{L}'$. 
For any sufficiently small $\varepsilon  > 0$, 
let 
$U_{1, \varepsilon } = B_{\varepsilon } (\hat{L}_1)$ 
(resp. $U_{\varepsilon }' = B_{\varepsilon } (\hat{L}')$) 
be the open $\varepsilon $-ball of $\hat{L}_1$ 
(resp. $\hat{L}'$). 
Then 
$\overline{U_{1, \varepsilon /2}} \subseteq U_{1, \varepsilon }$  
and 
$\overline{U_{\varepsilon/2}'} \subseteq U_{\varepsilon }'$. 
Since $\varepsilon $ is small and 
$X$ is normal,  
we obtain 
$U_{1, \varepsilon } \cap U_{\varepsilon }' = \emptyset$. 
Since $\hat{\F}$ is usc, 
there are neighbourhoods $V_{1, \varepsilon } \subseteq U_{1, \varepsilon/2}$ 
(resp. $V'_{\varepsilon } \subseteq U_{\varepsilon/2}'$) of $\hat{L}_1$ 
(resp. $\hat{L}'$) 
such that 
$V_{1, \varepsilon } \sqcup V'_{\varepsilon }$ is an $\hat{\F}$-saturated neighbourhood of $\hat{\F} (x)$. 
Since $\hat{\F}(x)$ is compact 
and 
$V_{1, \varepsilon} \sqcup V'_{\varepsilon}$ is an open neighbourhood of 
$\hat{\F}(x)$, 
there are finitely many connected components 
of $V_{1, \varepsilon} \sqcup V'_{\varepsilon}$ 
each of which intersects $\hat{\F}(x)$ 
and 
whose union is also a covering of $\hat{\F}(x)$. 
Let $W_1$ be the finite union in $V_{1, \varepsilon} \sqcup V'_{\varepsilon}$. 
Since 
$W_1 \supseteq \hat{\F}(x)$ 
and 
$g(C) \cap \hat{\F}(x) \neq \emptyset$ for any $g \in G$ and 
any connected component $C$ of $W_1$, 
we have 
$G(W_{1})$ also consists of finitely many connected components. 
Since 
$G(W_1) 
\subseteq 
{\hat{\F}}(W_{1})
\subseteq 
\overline{G(W_{1})}$, 
we have that 
${\hat{\F}}(W_{1})  \subseteq V_{1, \varepsilon } \sqcup V'_{\varepsilon }$ 
also consists of finitely many connected components.  
Let $W_{11}, \dots , W_{1l}$ be the connected components of 
${\hat{\F}}(W_{1})$ intersecting 
$V_{1, \varepsilon }$.  
Then 
$W := W_{11} \sqcup \dots \sqcup W_{1l} \subset V_{1, \varepsilon }$ 
is a neighbourhood of $\hat{L}_1 = \hat{\F}_H (x)$
with $W_{1i} \cap \hat{L}_1 \neq \emptyset$ 
for any $i = 1, \dots , l$.    
We show that $W$ is 
$\hat{\F}$-saturated. 
Indeed, 
since 
${\hat{\F}}(W) \cap V_{1, \varepsilon } = W$  
and 
$h( W_{1i}) \cap \hat{L}_1 \neq \emptyset$ 
for any $h \in H$ and $i = 1, \dots , l$, 
we have 
$V_{1, \varepsilon } \supseteq W = H(W)$. 
Since 
$\hat{\F}_H(W) \subseteq \overline{H(W)} \subseteq U_{1, \varepsilon}$, 
we have 
$\hat{\F}_H(W) \cap V'_{\varepsilon} = \emptyset$.  
Since $V_{1, \varepsilon} \sqcup V'_{\varepsilon}$ is 
an $\hat{\F}$-saturated neighbourhood of ${\hat{\F}}(W)$, 
we obtain 
$\hat{\F}_H(W) \subseteq V_{1, \varepsilon }$ 
and so 
$W = H(W) \subseteq \hat{\F}_H(W) \subseteq 
{\hat{\F}}(W) \cap V_{1, \varepsilon }  = W$. 
Thus 
$W$ is 
a desired $\hat{\F}_H$-saturated neighbourhood of $\hat{\F}_H(x)$ contained in 
$W \subseteq U_{1, \varepsilon } = B_{\varepsilon } (\hat{\F}_H (x))$. 
\end{proof}

\section{Codimension one or two results}

%
%
%
First, we consider the codimension one case.

\begin{proposition}\label{prop:003}
Let $G$ be an $R$-closed flow on a compact connected manifold $X$, 
$H$ a finite index normal subgroup of $G$, 
and 
$V$ the union of orbit closures of $H$ 
which are codimension one connected elements. 
If  
there is a nonempty connected component of $V$ which 
is open and consists of submanifolds,   
then $M/\hat{\mathcal{F}} _G$ is a closed interval or a circle 
such that 
there are at most two elements whose codimension more than one.  
\end{proposition}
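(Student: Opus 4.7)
The plan is to reduce the question from $G$ to the finite-index normal subgroup $H$ via Lemma~\ref{prop21}, exploit the $1$-dimensional structure of the leaf space of $\hat{\F}_H$ on the distinguished open component $V_0$ of $V$, and then descend back to $G$ via the induced action of the finite group $G/H$.

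Since $H$ has finite index in $G$ it is syndetic, so by Lemma~\ref{prop21} $H$ is $R$-closed, and Proposition~\ref{prop:002} then makes $\hat{\F}_H$ upper semicontinuous with closed quotient map $\pi\colon X \to X/\hat{\F}_H$ onto a compact Hausdorff space. The given $V_0$ is $\hat{\F}_H$-saturated, open in $X$, and fibred by compact codimension-one connected submanifolds, so usc together with tubular neighbourhoods of these submanifolds endows $\pi(V_0)$ with the structure of a connected $1$-manifold, possibly with boundary (the boundary corresponding to one-sided leaves). A connected $1$-manifold is thus homeomorphic to $S^1$, $[0,1]$, $\mathbb{R}$, or $[0,1)$.

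In the first two cases $\pi(V_0)$ is compact, hence $V_0$ is clopen in $X$ and equals $X$ by connectedness, so $X/\hat{\F}_H \cong S^1$ or $[0,1]$ with no elements of codimension greater than one. In the latter two cases, closedness of $\pi$ gives $\pi(\overline{V_0}) = \overline{\pi(V_0)}$, and closing $\mathbb{R}$ or $[0,1)$ in a compact Hausdorff space adds at most two points; these correspond to $\hat{\F}_H$-elements $L_\pm$ not in $V_0$, hence of codimension strictly greater than one, and so $\pi(\overline{V_0})$ is homeomorphic to $[0,1]$ (or possibly $S^1$ if the two ends coincide).

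The main obstacle is to upgrade $\overline{V_0}$ to all of $X$. I would show that $\overline{V_0}$ is also open, equivalently that each $L_\pm$ has a neighbourhood contained in $\overline{V_0}$: given a small saturated open neighbourhood $W$ of $L_+$ furnished by usc, the image $\pi(W)$ is an open connected neighbourhood of $\pi(L_+)$ in $X/\hat{\F}_H$ meeting the $1$-manifold $\pi(V_0)$ in a half-neighbourhood of the end corresponding to $L_+$; tracking this $1$-manifold structure forces every leaf in $W$ sufficiently close to $L_+$ to lie in $V_0$. Then $\overline{V_0}$ is clopen, so $\overline{V_0} = X$ by connectedness. Finally, $X/\hat{\F}_G$ is the quotient of $X/\hat{\F}_H$ by the induced action of $G/H$ through the coset representatives $f_1, \dots, f_{n-1}$; since a finite-group quotient of $S^1$ or $[0,1]$ is again $S^1$ or $[0,1]$, and the at most two singular leaves of $\hat{\F}_H$ descend to at most two singular leaves of $\hat{\F}_G$, the desired conclusion follows.
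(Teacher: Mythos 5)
Your proposal follows the same skeleton as the paper (reduce to $H$ via Lemma~\ref{prop21}, get upper semicontinuity from Proposition~\ref{prop:002}, exhibit the image of the open component as a connected $1$-manifold, compactify, then quotient by $G/H$; the paper gets the $1$-manifold structure from Theorem 3.3 of \cite{D2} rather than from your tubular-neighbourhood sketch). But the two steps that carry the real content of the proposition both have genuine gaps. The first is your claim that ``closing $\mathbb{R}$ or $[0,1)$ in a compact Hausdorff space adds at most two points.'' This is false as a statement of general topology: the set $\{(t,\sin(1/t)) : 0 < t \le 1\}$ is homeomorphic to $[0,1)$, lies in a compact subset of the plane, and its closure adds an entire arc. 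What makes the step work is $R$-closedness, which you never invoke here: if classes $\hat{\mathcal{F}}_H(x_k)$ converge in the Hausdorff metric and $x_k \to x$, then closedness of the relation $R$ places the whole Hausdorff limit inside the single class $\hat{\mathcal{F}}_H(x)$, so each end of $\pi(V_0)$ limits onto a single point of the quotient; this is exactly the paper's assertion that each connected component of the boundary of $p(U)$ is a single point.

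The second gap is your argument that $\overline{V_0}=X$. You propose to show $\overline{V_0}$ is open by ``tracking the $1$-manifold structure'' near $L_\pm$, but nothing guarantees the quotient is a $1$-manifold near $\pi(L_\pm)$: classes lying outside $\overline{V_0}$ --- codimension-one leaves from \emph{other} components of $V$, or a higher-dimensional family of lower-dimensional classes --- could accumulate on $L_+$, in which case every saturated neighbourhood $W$ of $L_+$ contains leaves arbitrarily close to $L_+$ that are not in $V_0$. Excluding this possibility is the crux, and a local argument cannot do it. The paper's mechanism is dimension-theoretic and global: every class in $\partial V_0 := \overline{V_0} - V_0$ has codimension at least two (a codimension-one connected class in $\partial V_0$ would lie in $V$, hence, being in the closure of $V_0$, in the same connected component $V_0$ --- a contradiction); a closed subset of covering dimension at most $\dim X - 2$ cannot disconnect the connected manifold $X$; and since $\overline{V_0} \cap (X - \partial V_0) = V_0$, the set $V_0$ is clopen in the connected set $X - \partial V_0$, forcing $V_0 = X - \partial V_0$ and hence $\overline{V_0} = X$. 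Some argument of this kind must replace your paragraph on openness of $\overline{V_0}$ before the compactification and the $G/H$-quotient steps (which are fine) can be applied.
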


Note that dimensions in the above statement are 
Lebesgue covering dimensions. 

\begin{proof}
By Lemma \ref{prop21}, 
we have that 
$H$ is also $R$-closed. 
Put $\hat{\mathcal{F}}  := \hat{\mathcal{F}} _H$. 
By Proposition \ref{prop:002}, 
we have that 
$M/\hat{\mathcal{F}} $ is Hausdorff. 
Let $U$ be the above open connected component of $V$. 
Put $p: M \to M/\hat{\mathcal{F}} $ the canonical projection. 
By Theorem 3.3 \cite{D2}, 
we have that 
$p(U)$ is a $1$-manifold. 
Since $\hat{\mathcal{F}} $ is $R$-closed, 
each connected component of 
the boundaries of $p(U)$ is a single point. 
Since $U$ is open, 
we obtain that 
the boundaries $\partial U := \overline{U} - \mathrm{int}U$ 
have at least codimension two 
and so that 
$M - \partial U$ is connected. 
%
This implies that 
$M/\hat{\mathcal{F}} $ is a closed interval or a circle. 
Since $G/H$ is a finite group acting $M/\hat{\mathcal{F}} $ 
and since a finite union of closures is a closure of finite union, 
we have that 
$M/\hat{\mathcal{F}} _G = (M/\hat{\mathcal{F}} _H)/(G/H)$ is either 
a closed interval or a circle and 
that 
there are at most two elements whose codimension more than one. 
\end{proof}

Second, we consider the codimension two case. 
%
%
 Consider the direct system $\{K_a\}$ 
 of compact subsets of a topological space $X$ and inclusion maps such that 
 the interiors of $K_a$ cover $X$.  
 There is a corresponding inverse system $\{ \pi_0( X - K_a ) \}$, 
 where $\pi_0(Y)$ denotes 
 the set of connected components of a space $Y$. 
 Then the set of ends of $X$ is defined to be the inverse limit of this inverse system.
By surfaces, we mean compact $2$-dimensional manifolds with conners
(i.e. locally modeled by $[0, 1]^2$).

\begin{proposition}
Let $G$ be an $R$-closed flow on a compact manifold $X$ and 
$H$ a finite index normal subgroup of $G$. 
Suppose that 
all orbit closures of $H$ are closed connected submanifolds. 
If 
all but finitely many closures have   
codimension two 
and finite exceptions have codimension more than two, 
then $M/\hat{\mathcal{F}} _G$ is a surface. 
\end{proposition}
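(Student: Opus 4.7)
The plan is to proceed by direct analogy with Proposition \ref{prop:003}. First I would apply Lemma \ref{prop21} to conclude that $H$ is $R$-closed, then set $\hat{\mathcal{F}} := \hat{\mathcal{F}}_H$ and write $p : M \to M/\hat{\mathcal{F}}$ for the canonical projection. By Proposition \ref{prop:002}, $\hat{\mathcal{F}}$ is upper semicontinuous and $M/\hat{\mathcal{F}}$ is compact Hausdorff.

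Let $V$ denote the saturated subset of $M$ formed by the union of all codimension-two orbit closures of $H$. The hypothesis guarantees that $M - V$ is the disjoint union of finitely many closed submanifolds of codimension strictly greater than two, so $V$ is open and dense. Applying Theorem 3.3 of \cite{D2} to the restricted decomposition $\hat{\mathcal{F}}|_V$, I obtain that $p(V)$ carries the structure of a $2$-manifold without boundary. Compactness of $M$ together with closedness of $p$ (Lemma 1.6 \cite{Y}) forces $M/\hat{\mathcal{F}}$ to be compact Hausdorff, and the exceptional image $p(M - V) = \{q_1, \dots, q_r\}$ is a finite set.

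It remains to analyze a neighborhood of each exceptional point $q_i$ in $M/\hat{\mathcal{F}}$. Writing $L_i$ for the corresponding closure, I would use upper semicontinuity together with the finite-component selection argument from Lemma \ref{prop21} to pick a saturated open neighborhood $U_i$ of $L_i$ disjoint from the other exceptional closures. Then $p(U_i) - \{q_i\}$ is an open subset of the $2$-manifold $p(V)$; by examining how the nearby codimension-two closures accumulate onto $L_i$ and invoking that $L_i$ has codimension at least three in $M$, I would identify $p(U_i)$ with a neighborhood of a point in $[0,1]^2$, i.e., an interior, edge, or corner chart. This shows $M/\hat{\mathcal{F}}$ is a surface in the sense of the paper. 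Finally, since $G/H$ acts on $M/\hat{\mathcal{F}}$ as a finite group of homeomorphisms and a finite group quotient of a compact surface with corners is again a compact surface with corners, the space $M/\hat{\mathcal{F}}_G = (M/\hat{\mathcal{F}}_H)/(G/H)$ is a surface, as required.

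The main obstacle is clearly the local chart analysis near each exceptional point $q_i$: one must rule out local models other than the interior, edge, and corner charts of $[0,1]^2$. Concretely, one has to bound by two the number of boundary strata of $p(U_i)$ that accumulate at $q_i$, using only the codimension-at-least-three hypothesis on $L_i$, the upper semicontinuity of $\hat{\mathcal{F}}$, and the $2$-manifold structure on $p(V)$ supplied by Theorem 3.3 of \cite{D2}. Any orbifold-type identifications created by the $G/H$-action will also need to be checked compatible with the corner structure, but this is expected to follow once the local picture around each $q_i$ is pinned down.
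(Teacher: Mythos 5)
Your proposal has two genuine gaps. The first is the key citation: you apply Theorem 3.3 of \cite{D2} to the codimension-two part $V$ and claim it yields a $2$-manifold structure on $p(V)$. But that theorem concerns decompositions into codimension-\emph{one} submanifolds and produces a $1$-manifold quotient; it is the tool for Proposition \ref{prop:003}, not for this statement. What is needed (and what the paper uses) is Daverman's codimension-\emph{two} theorem, Theorem 3.12 of \cite{D3}, applied to the usc decomposition $\hat{\mathcal{F}}'$ of the open manifold $M' := M - (L_1 \cup \cdots \cup L_k)$ obtained by deleting the finitely many exceptional closures; this is what gives that $M'/\hat{\mathcal{F}}'$ is a surface $S'$.

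The second gap is the one you flag yourself: the analysis at the exceptional points $q_i$, which you defer as ``the main obstacle.'' That step is precisely where the proof has content, and your sketch supplies no argument for it --- in particular, you never specify how the hypothesis that each $L_i$ has codimension more than two is to be used, beyond ``examining how the nearby closures accumulate.'' The paper avoids local chart analysis entirely and argues with ends instead: upper semicontinuity gives pairwise disjoint saturated neighbourhoods $V_i$ of the $L_i$, and the codimension $> 2$ hypothesis guarantees that $W_i - L_i$ is connected for every connected neighbourhood $W_i$ of $L_i$, so each exceptional closure contributes exactly one end of $S'$; hence $S'$ has exactly $k$ ends, one per $L_i$. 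Since $M/\hat{\mathcal{F}}_H$ is compact metrizable and equals $S'$ with the $k$ points $p(L_i)$ adjoined, it is the end compactification of $S'$ and therefore a surface $S$. Your closing step (the finite group $G/H$ acts on $S$, so $M/\hat{\mathcal{F}}_G \cong (M/\hat{\mathcal{F}}_H)/(G/H)$ is a surface) matches the paper, but as written your argument is incomplete at the decisive point, and the missing connectivity/ends argument is not a routine verification: without it you have no control on how the codimension-two closures accumulate on $L_i$ and no way to exclude a non-manifold point in the quotient.
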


\begin{proof} 
By Lemma \ref{prop21}, 
we have that 
$H$ is also $R$-closed. 
Put $\hat{\mathcal{F}}  := \hat{\mathcal{F}} _H$. 
Let $L_1, \dots , L_k$ be all higher codimension elements of $\hat{\mathcal{F}} $. 
Removing higher codimensional elements, 
let $M'$ be the resulting manifold, 
$\hat{\mathcal{F}} '$ the resulting decomposition of $M'$. 
Then $\hat{\mathcal{F}} '$ consists of codimension two closed connected submanifolds 
and is usc. 
By Theorem 3.12 \cite{D3},  
we have that 
$M'/\hat{\mathcal{F}} '$ is a surface $S'$.  
Then $(M/\hat{\mathcal{F}} ) - \{L_1, \dots , L_k \} \cong M'/\hat{\mathcal{F}} ' = S'$.  
We will show that 
$S'$ has $k$ ends. 
Indeed, 
since the exceptions $L_i$ are finite, 
there are pairwise disjoint neighbourhoods $U_i$ of them.
Since $\hat{\mathcal{F}} $ is usc, 
there are pairwise disjoint saturated neighbourhoods $V_i \subseteq U_i$ of them. 
Since $W_i - L_i$ is connected for any connected neighbourhoods $W_i$ of $L_i$, 
each end of $S'$ is corresponded to one of $L_i$. 
This shows that 
$S'$ has $k$ ends corresponding to $L_i$.  
Since $M/\hat{\mathcal{F}} $ is compact metrizable, 
we have that 
$M/\hat{\mathcal{F}} $ is an end compactification of $M'/\hat{\mathcal{F}} '$ and so 
a surface $S$. 
Since $G/H$ implies a finite group action on $S$ 
and since any finite union of closures is the closure of finite union, 
we obtain that 
$M/\hat{\mathcal{F}} _G \cong (M/\hat{\mathcal{F}} _H)/(G/H)$ is a surface. 
\end{proof}


\section{Toral minimal sets}

Recall that 
a vector field is trivial if it is identical or minimal. 
We obtain the following two statements for vector fields on $3$-manifolds. 
For a flow $v$  on a $3$-manifold, 
a minimal set $T$ of $v$ is called a torusoid for $v$ if 
there are 
an open annulus $A$ transverse to  $v$
and a circloid $C$ in $A$ whose saturation $\mathrm{Sat}_v(C)$ is $T$
with $C = T \cap A$.

\begin{lemma}\label{th:005}
Let $v$ be an $R$-closed vector field on a connected compact $3$-manifold $M$. 
Then the union of torusoids is open 
and the quotient space is $1$-dimensional. 
\end{lemma}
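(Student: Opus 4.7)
The plan is to combine the upper semicontinuity of $\hat{\F}_v$ supplied by Proposition \ref{prop:002} with the transverse annulus structure built into the torusoid hypothesis. I will first establish openness of the union $U$ of torusoids, and then analyze the local quotient $U/\hat{v}$ via these transverse annuli.

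For openness, fix a torusoid $T$, an open annulus $A$ transverse to $v$, and a circloid $C \subseteq A$ with $T \cap A = C$ and $\mathrm{Sat}_v(C) = T$. Using a flow box neighborhood of the compact set $T$, I would produce a saturated open neighborhood $W$ of $T$ such that every orbit meeting $W$ also meets $A$. Upper semicontinuity of $\hat{\F}_v$ then allows me to shrink $W$ so that for every $x \in W$, the orbit closure $\hat{\F}_v(x)$ is Hausdorff-close to $T$ and still transverse to $A$. The intersection $\hat{\F}_v(x) \cap A$ is thus a compact subset of $A$ close to $C$ whose $v$-saturation equals $\hat{\F}_v(x)$, and I would argue it is itself a circloid, exhibiting $\hat{\F}_v(x)$ as a torusoid and giving $W \subseteq U$.

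For one-dimensionality of $U/\hat{v}$, I would work locally near a torusoid $T$ with transverse annulus $A$. The composition $A \hookrightarrow W \to W/\hat{v}$ is continuous and surjective onto the image of $W$ in $U/\hat{v}$, and by the openness step it factors through the decomposition of $A$ into circloids $\hat{\F}_v(x) \cap A$. This induced decomposition of $A$ is usc (inherited from $\hat{\F}_v$ via Proposition \ref{prop:002}), and a usc decomposition of an annulus into circloids has a one-dimensional Hausdorff quotient. Because $U$ is covered by such transverse annulus charts and Proposition \ref{prop:002} guarantees that the global quotient is Hausdorff, one-dimensionality passes from the local charts to $U/\hat{v}$.

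The main obstacle I anticipate is the openness step, specifically verifying that $\hat{\F}_v(x) \cap A$ is genuinely a circloid in $A$ (rather than merely a compact invariant subset of $A$ under the first return map) for every $x$ close to $T$. This requires a stability property of circloids under Hausdorff-small perturbations inside a flow-transverse annulus, which I would try to extract from the combination of upper semicontinuity, pointwise almost periodicity of $\hat{\F}_v$, and the planar topology of $A$.
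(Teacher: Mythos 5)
Your proposal stalls exactly where you say it does, and that gap is the whole content of the lemma. The step you defer---showing that for $x$ near $T$ the set $\hat{\F}_v(x) \cap A$ is genuinely a circloid---cannot be extracted from a ``stability property of circloids under Hausdorff-small perturbations'': a compact invariant set of the return map that is Hausdorff-close to a circloid need not be a circloid (it can be disconnected, or an inessential continuum, or a proper invariant subcontinuum), and upper semicontinuity plus pointwise almost periodicity by themselves give you only closeness and invariance, not the separating-continuum structure. So as written the openness argument is not a proof but a restatement of the difficulty, and the one-dimensionality argument, which presupposes that the induced decomposition of $A$ is by circloids, inherits the same gap.

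The paper closes this gap by changing dimension rather than perturbing circloids. It takes a connected $\hat{\F}_v$-saturated neighbourhood $W$ of $T$, passes to $W_A = W \cap A$, fills in the contractible complementary components to get an open annulus $\mathrm{Fill}_A(W_A)$ invariant under the return map $f'$, and then two-point compactifies to obtain a sphere $S$ with an induced homeomorphism $f_S$ fixing the two added points. Hausdorffness of $M/\hat{v}$ (your Proposition \ref{prop:002}) descends to the quotient of $\mathrm{Fill}_A(W_A)$ and then to $S/\hat{\F}_{f_S}$, so $f_S$ is an $R$-closed sphere homeomorphism; Corollary 2.6 of \cite{Y2} then classifies its orbit-closure decomposition as two fixed points together with circloids. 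That classification theorem for $R$-closed surface homeomorphisms is the missing idea: it is what certifies that \emph{every} nearby orbit closure meets the filled annulus in a circloid, giving both openness of the union of torusoids and the one-dimensional local structure of the quotient. Without invoking it (or reproving something equivalent), your outline does not go through.
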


\begin{proof}
Let $A$ be an annular transverse manifold of $v$ 
and $C \subset A$ a circloid whose 
saturation is a torusoid $T$ with 
$C = T \cap A$. 
Take small connected neighbourhoods 
$U_1, U_2 \subseteq \mathrm{Sat}_{v}(A)$ of $T$ 
such that  the time one map $f_v: U_1 \cap T \to U_2 \cap T$ is well defined. 
Since $v$ is $R$-closed and so $\hat{\F}_{v}$ is usc, 
there is an $\hat{\F}_{v}$-saturated open neighbourhood 
$V \subseteq U_1 \cap U_2$ of $T$. 
Since $T$ is $\hat{\F}_{v}$-saturated and connected, 
the connected component $W$ of $V$ is also 
an $\hat{\F}_{v}$-saturated open neighbourhood of $T$. 
%
Since $W \subseteq V$, 
we have that 
$W_A := W \cap A \subseteq U_1\cap U_2 \cap A$ and so 
$f_{W_A} := f_v|: W_A \to W_A$ is well-defined. 
Since $v$ is $R$-closed, 
the mapping $f_{W_A}$ is a homeomorphism. 
Since $T \cap A = C \subset W_A$ is $f_v$ invariant 
and $U_1, U_2$ are small,  
we have that 
$W_A$ is connected. 
Let $B$ be the union of the connected components of $A - W_A$ 
which are contractible in $A$. 
Then $B \cup W_A$ is an open annulus 
and 
$\mathrm{Sat}_v(B \cup W_A)$ is an 
$\hat{\F}_{v}$-saturated connected open neighbourhood of $T$ 
which consists of torusoids. 
%
%
Indeed, 
define $\mathrm{Fill}_A(W_A)$ as follows:  
$ p \in \mathrm{Fill}_A(W_A)$ if 
there is a simple closed curve in $W \cap A$ which 
bounds  a disk in $A$ containing 
$p \in A$. 
Since each point of $B$ is bounded 
by a simple closed curve in $W_A$, 
we have that 
$B \sqcup W_A = \mathrm{Fill}_A(W_A)$ 
and 
$f' := f_v|: \mathrm{Fill}_A(W_A) \to \mathrm{Fill}_A(W_A)$ is a homeomorphism. 
Since $C$ is a circloid and $A$ is the annular neighbourhood, 
we obtain that 
$\mathrm{Fill}_A(W_A)$ is an open annulus. 
By the two point compactification of $\mathrm{Fill}_A(W_A)$,
we obtain the resulting sphere $S$ and 
the resulting homeomorphism $f_{S}$ with the two fixed points which are the added new points. 
Since $v$ is $R$-closed, 
we have that 
$M/\hat{v}$ is Hausdorff 
and so is $\mathrm{Fill}_A(W_A)/\hat{\F}_{f'}$. 
By the construction, $S/\hat{\F}_{f_S}$ is the 
two point compactification of $\mathrm{Fill}_A(W_A)/\hat{\F}_{f'}$. 
Since 
$M$ is normal, 
we obtain that 
$S/\hat{\F}_{f_S}$ is Hausdorff 
and 
so 
$f_S$ is $R$-closed. 
By Corollary 2.6 \cite{Y2}, 
we obtain that 
$S$ consists of two fixed points and 
circloids. 
Then the open neighbourhood 
$\mathrm{Sat}_{v}(B \sqcup W_A) 
= \mathrm{Sat}_{v}(\mathrm{Fill}_A(W_A))$ of $T$ 
consists of torusoids. 
Therefore 
the union of torusoids are open. 
\end{proof}

Recall that 
a minimal set is trivial if 
a single orbit or the whole manifold. 

\begin{lemma}\label{lem52} 
Let $v$ be an $R$-closed vector field on a connected compact $3$-manifold $M$. 
Suppose that each two dimensional minimal set is torusoid. 
If there is a torusoid,  
then the orbit class space $M/\hat{v}$ of $M$ is a closed interval or a circle. 
\end{lemma}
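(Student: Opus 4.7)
The strategy is to reduce the claim to Proposition \ref{prop:003} applied to the $\mathbb{R}$-action generated by $v$. Since $v$ is $R$-closed, its orbit closures form a pointwise almost periodic decomposition, and each orbit closure is a minimal set; in particular every two-dimensional orbit closure is a two-dimensional minimal set, hence a torusoid by hypothesis. Conversely, every torusoid arises as a two-dimensional orbit closure. Thus the union $V$ of torusoids coincides exactly with the union of those orbit closures of $v$ which are codimension one connected submanifolds.

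Next I would invoke Lemma \ref{th:005}, which shows that $V$ is open in $M$; the standing hypothesis that at least one torusoid exists then ensures $V$ is nonempty. Consequently any connected component $U$ of $V$ is a nonempty open subset of $M$ whose underlying orbit closures are codimension one connected submanifolds, supplying precisely the hypothesis required by Proposition \ref{prop:003}.

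Finally I would apply Proposition \ref{prop:003} with $G = H$ taken to be the flow generated by $v$ (trivially a finite index normal subgroup of itself) and with the above $V$, concluding that $M/\hat{v}$ is a closed interval or a circle. The only substantive step is the identification of codimension one orbit closures with torusoids, which is where the assumption ``each two dimensional minimal set is torusoid'' is consumed; I do not anticipate a genuine obstacle, since the geometric input about openness of the torusoid locus is already handled in Lemma \ref{th:005} and the codimension one structure theorem is packaged in Proposition \ref{prop:003}.
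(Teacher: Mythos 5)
Your reduction hinges on the claim that the union of torusoids ``coincides exactly with the union of those orbit closures of $v$ which are codimension one connected submanifolds,'' and that is precisely where the argument breaks. A torusoid is by definition the saturation $\mathrm{Sat}_v(C)$ of a circloid $C$ in a transverse annulus, and a circloid need not be a circle --- it can, for instance, be a pseudo-circle --- so a torusoid need not be a torus, nor a submanifold of any kind. Nothing in the hypothesis of Lemma \ref{lem52} excludes this: the hypothesis says every two dimensional minimal set is a torusoid, not that every torusoid is a two dimensional submanifold. Since Proposition \ref{prop:003} explicitly requires a nonempty open connected component of $V$ that \emph{consists of submanifolds} (its proof rests on Theorem 3.3 of \cite{D2}, a structure theorem for decompositions into codimension one submanifolds), you are not entitled to invoke it, and there is no easy repair: proving the relevant circloids are circles is not possible from the given hypotheses, and the paper makes no such claim (indeed Corollary 2.6 of \cite{Y2}, used in Lemma \ref{th:005}, produces circloids, not circles).

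The paper instead uses the full strength of Lemma \ref{th:005}: that lemma gives not only openness of the union of torusoids (the only part you use) but also that the quotient space of this union is $1$-dimensional; its proof is exactly the device that handles non-manifold circloids, via the two-point compactification and $R$-closed sphere homeomorphisms. With that in hand, the paper finishes with a connectedness argument your proposal never supplies on its own: every minimal set that is not a torusoid is a closed orbit (this is where the hypothesis on two dimensional minimal sets is consumed), each boundary component of the union of torusoids is a single such closed orbit, hence of codimension greater than one, so deleting these boundaries does not disconnect $M$; therefore the union of torusoids is connected and dense, and $M/\hat{v}$ is a closed interval or a circle. The correct skeleton is thus Lemma \ref{th:005} (openness \emph{and} one-dimensionality of the quotient) plus the codimension count on the boundary, not an appeal to Proposition \ref{prop:003}.
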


\begin{proof} 
Note that 
each minimal set which is not a torusoid is a closed orbit. 
By Lemma \ref{th:005}, 
the union of torusoids is open and 
the quotient space of it is 
a $1$-manifold. 
Since $v$ is $R$-closed, 
the each boundary component of it is a single minimal set. 
Since the each boundary component is a closed orbit and so is 
codimension more than $1$, 
the union of torusoids is connected. 
This implies that 
$M/\hat{v} $ is a closed interval or a circle. 
\end{proof}


\begin{lemma}\label{lem:006} 
Let $v$ be an $R$-closed vector field on a connected compact $3$-manifold $M$. 
Suppose that each two dimensional minimal set is torusoid. 
If there are at least three distinct periodic orbits, 
then $\mathrm{Per}(v)$ is open dense 
and 
$M = \mathrm{Sing}(v) \sqcup \mathrm{Per}(v)$. 
\end{lemma}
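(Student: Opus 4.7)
The plan is to eliminate, one at a time, every possibility for an orbit closure other than a singular point or a periodic orbit, and then analyze the resulting partition. The hypothesis of at least three distinct periodic orbits is the lever that excludes the torusoidal structure described by Lemma \ref{lem52}.

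First I would rule out torusoids. Assume, toward contradiction, that $v$ admits a torusoid. Since every $2$-dimensional minimal set is a torusoid by hypothesis and one exists, Lemma \ref{lem52} applies and $M/\hat{v}$ is either $[0,1]$ or $S^1$. By Lemma \ref{th:005} the union of torusoids is open and its image forms the interior of $M/\hat{v}$, so every interior point of $M/\hat{v}$ corresponds to a $2$-dimensional orbit closure. Periodic orbits are $1$-dimensional, so they can only correspond to boundary points of $M/\hat{v}$: at most two in the $[0,1]$ case and none in the $S^1$ case. This contradicts the hypothesis, so no torusoid exists, and by the hypothesis no $2$-dimensional minimal set exists at all.

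Next I would rule out the remaining non-singular, non-periodic possibilities. If $v$ were minimal, then $M$ itself would be the unique orbit closure and no periodic orbit would exist, contradicting the hypothesis. Any other $3$-dimensional orbit closure is ruled out because in a connected $3$-manifold such a closed saturated set has nonempty interior, and combined with upper semicontinuity of $\hat{\mathcal{F}}_v$ (Proposition \ref{prop:002}) and disjointness of orbit closures it must exhaust $M$. Consequently every orbit closure is $0$- or $1$-dimensional, giving $M = \mathrm{Sing}(v) \sqcup \mathrm{Per}(v)$, and $\mathrm{Per}(v)$ is open as the complement of the closed zero set $\mathrm{Sing}(v)$.

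The main obstacle will be density. To show $\mathrm{Sing}(v)$ has empty interior, I would argue by contradiction: a nonempty open $W \subseteq \mathrm{Sing}(v)$ would be $\hat{v}$-saturated, and the plan is to combine upper semicontinuity of $\hat{\mathcal{F}}_v$ with the existence of three periodic orbits to find a periodic orbit $\gamma$ arbitrarily close to $\partial W$ whose usc saturated neighborhood is forced to consist of periodic orbits and yet must meet $W$, violating the partition $M = \mathrm{Sing}(v) \sqcup \mathrm{Per}(v)$. Alternatively, one may apply the codimension-two proposition from the previous section to the periodic part to conclude that the singular set consists of finitely many orbits and therefore cannot have nonempty interior. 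I expect this density step to be the technically most delicate portion of the proof: the earlier steps are essentially dimensional bookkeeping, whereas this one requires marshaling the full structural strength of $R$-closedness on a $3$-manifold.
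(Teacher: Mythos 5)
Your first two paragraphs track the paper's own reasoning: the paper also disposes of two-dimensional orbit closures by observing that, were a torusoid to exist, Lemma \ref{lem52} (via Lemma \ref{th:005}) would force $M/\hat{v}$ to be an interval or circle whose interior points are torusoid classes, leaving room for at most two periodic orbits and contradicting the hypothesis; it then obtains the partition $M = \mathrm{Sing}(v) \sqcup \mathrm{Per}(v)$ and the openness of $\mathrm{Per}(v)$ exactly as you do. (Your silent passage from ``no two- or three-dimensional orbit closures'' to ``every orbit closure is a singular point or a periodic orbit'' ignores possible one-dimensional minimal sets that are not closed orbits, but the paper makes the same jump, so this is a shared issue rather than a defect of your proposal.)

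The genuine gap is the density step, and neither of your two proposed routes repairs it. The first is circular: you need a periodic orbit ``arbitrarily close to $\partial W$'', but a priori the three given periodic orbits may lie far from $W$ --- the existence of periodic orbits accumulating on $\partial W$ is essentially the density you are trying to prove. Moreover, upper semicontinuity only yields \emph{some} saturated neighbourhood inside a prescribed neighbourhood; nothing forces such a saturated set to ``consist of periodic orbits'', since a saturated set may perfectly well contain singular points, and then no contradiction with the partition arises. The second route is backwards: the codimension-two proposition of Section 4 takes as a \emph{hypothesis} that the exceptional (higher-codimension) closures are finite in number, so it cannot be invoked to conclude finiteness of $\mathrm{Sing}(v)$; if $\mathrm{Sing}(v)$ had nonempty interior, that proposition would simply fail to apply, producing no contradiction. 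The paper's actual argument is dimension-theoretic: applying Theorem 3.12 of \cite{D3} to the restriction of $\hat{\mathcal{F}}_v$ to the open saturated set $\mathrm{Per}(v)$, the image $p(\mathrm{Per}(v))$ under the projection $p \colon M \to M/\hat{v}$ is a $2$-manifold with corners; by Urysohn's theorem the covering and small inductive dimensions agree on the compact metrizable space $M/\hat{v}$, so $\partial\bigl(p(\mathrm{Per}(v))\bigr)$, and hence $\partial(\mathrm{Per}(v))$ (which consists of singularities), has covering dimension at most one; a subset of codimension at least two cannot disconnect the connected $3$-manifold $M$, so $M - \partial(\mathrm{Per}(v))$ is connected; since this set is the disjoint union of the nonempty open set $\mathrm{Per}(v)$ and the open set $M - \overline{\mathrm{Per}(v)}$, the latter is empty, i.e.\ $\mathrm{Per}(v)$ is dense. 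This ``the boundary of $\mathrm{Per}(v)$ is too small to separate $M$'' idea is precisely what your sketch is missing.
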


\begin{proof} 
Let 
$p:M \to M/\hat{v} $ be  the canonical projection.  
By Lemma \ref{lem52}, 
there are no two dimensional orbits closures. 
Therefore 
$M = \mathrm{Sing}(v) \sqcup \mathrm{Per}(v)$. 
Since $\mathrm{Sing}(v)$ is closed, 
we have that $\mathrm{Per}(v)$ is open. 
On the other hand, 
by Theorem 3.12 \cite{D3}, 
the quotient space of $\mathrm{Per}(v)$ is 
a $2$-dimensional manifold with conner. 
Since $M/\hat{v} $ is compact metrizable, 
by Urysohn's theorem, 
the Lebesgue covering dimension and 
the small inductive dimension are corresponded in $M/\hat{v} $.  
Hence 
the boundary $\partial(p(\mathrm{Per}(v)))$ of $p(\mathrm{Per}(v))$ has 
at most 
the small inductive dimension one. 
Since $\partial(p(\mathrm{Per}(v)))$ consists of 
singularities, 
we obtain that 
$\partial(\mathrm{Per}(v))$ has 
at most 
the small inductive dimension one and so 
the Lebesgue covering dimension one. 
Therefore 
$M - \partial(\mathrm{Per}(v))$ is connected 
and so $M - \partial(\mathrm{Per}(v)) = \mathrm{Per}(v)$. 
This shows that $\mathrm{Per}(v)$ is dense. 
\end{proof}

Now, we state the following trichotomy  
that $v$ is 
either 
``almost one dimensional'' 
or 
``almost two dimensional''
or 
with ``complicated'' minimal sets. 

\begin{proposition}
Let $v$ be an nontrivial $R$-closed vector field on a connected compact $3$-manifold $M$. 
Then one of the following holds: 
\\
1)
The orbit class space $M/\hat{v}$ of $M$ is a closed interval or a circle and 
each interior point of the orbit class is two dimensional. 
\\
2) 
$\mathrm{Per}(v)$ is open dense 
and 
$M = \mathrm{Sing}(v) \sqcup \mathrm{Per}(v)$. 
\\
3)
There is a nontrivial minimal set which is not a torusoid. 
\end{proposition}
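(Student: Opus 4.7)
The plan is a case analysis based on the negation of (3). Assume every nontrivial minimal set of $v$ is a torusoid; equivalently, every $2$-dimensional minimal set is a torusoid. Since $v$ is $R$-closed, every orbit closure is a minimal set, and since $v$ is nontrivial, $M$ itself is not a minimal set; connectedness of the $3$-manifold $M$ then rules out any proper $3$-dimensional minimal set. Hence each orbit closure has dimension $0$, $1$, or $2$, and the hypothesis forces it to be a singularity, a periodic orbit, or a torusoid respectively. The remainder of the argument splits on whether a torusoid actually exists.

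Suppose first that at least one torusoid exists. Then Lemma \ref{lem52} immediately gives $M/\hat{v} \cong [0,1]$ or $S^1$, and what remains is to see that every interior point is $2$-dimensional. Let $U \subseteq M$ denote the open $\hat{v}$-saturated union of all torusoids, as in Lemma \ref{th:005}; its image $p(U)$ in $M/\hat{v}$ is open, and is itself a $1$-manifold. The proof of Lemma \ref{lem52} also shows $p(U)$ is connected, so $p(U)$ is a connected open boundaryless $1$-manifold sitting inside the compact space $M/\hat{v}$; this forces $p(U)$ to be the whole interior of $M/\hat{v}$ (the open interval in the $[0,1]$ case, or all of $S^1$ in the other). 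By definition every point of $p(U)$ is the image of a torusoid, hence $2$-dimensional, yielding conclusion (1).

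Suppose instead that no torusoid exists. Then the trichotomy of orbit closures above collapses to $M = \mathrm{Sing}(v) \sqcup \mathrm{Per}(v)$. Nontriviality rules out $v \equiv 0$, so some point $p$ is nonsingular; the flowbox theorem then supplies a transverse $2$-disk $D$ through $p$, and every point of $D$ lies on a periodic orbit. A single periodic orbit meets the transversal $D$ in a discrete, hence finite, set, so the uncountably many points of $D$ give rise to uncountably many distinct periodic orbits. In particular at least three distinct periodic orbits exist, so Lemma \ref{lem:006} applies and delivers conclusion (2).

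The most delicate step is the dimensional identification of interior points in the torusoid case: neither Lemma \ref{lem52} (which only fixes the global shape of $M/\hat{v}$) nor Lemma \ref{th:005} (which only analyses the torusoid locus) is enough by itself, and one has to combine them to conclude that the boundaryless open connected $1$-manifold $p(U)$ must coincide with the interior of $M/\hat{v}$, so that non-torusoid classes can only appear as the (at most two) boundary points. Everything else — the trichotomy of orbit closures and the flowbox production of periodic orbits — should be a short consequence of the preceding lemmas.
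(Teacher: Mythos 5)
Your proof follows essentially the same route as the paper's: negate (3), observe that every orbit closure is then a singularity, a periodic orbit, or a torusoid, split on whether a torusoid exists, and settle the two cases with Lemma \ref{lem52}/Lemma \ref{th:005} and with the flow-box argument plus Lemma \ref{lem:006} respectively. Your second case is correct and is exactly the paper's argument (nontriviality gives a periodic orbit, the flow box theorem gives a singularity-free neighbourhood, hence more than three periodic orbits, hence Lemma \ref{lem:006} applies). One slip that happens to be harmless: ``every nontrivial minimal set is a torusoid'' is \emph{not} equivalent to ``every $2$-dimensional minimal set is a torusoid'' (a Denjoy-type suspension minimal set would be nontrivial, $1$-dimensional, and not a torusoid); you only ever use the true direction, namely that the first statement implies the hypotheses of Lemmas \ref{lem52} and \ref{lem:006}, so nothing breaks there.

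The genuine gap is in the first case, at the very step you single out as delicate. From ``$p(U)$ is a connected, open, boundaryless $1$-manifold inside $M/\hat{v}\cong[0,1]$ or $S^1$'' you infer that $p(U)$ is the whole interior. That inference is invalid: $(1/4,3/4)\subset[0,1]$ has all of those properties and is not the interior. What is missing is \emph{density} of $p(U)$, equivalently that the non-torusoid classes (closed orbits) cannot fill an open subset of $M$; a priori the complement of $p(U)$ could contain a whole subinterval consisting of closed-orbit classes. This is exactly what the proof of Lemma \ref{lem52} establishes en route: the frontier $\partial U=\overline{U}-U$ consists of closed orbits, which have codimension at least two, so $M-\partial U$ is connected; since $U$ is clopen in $M-\partial U$ (open in $M$, and closed in $M-\partial U$ because $\overline{U}\cap(M-\partial U)=U$), we get $U=M-\partial U$, i.e.\ $M=U\sqcup\partial U$. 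Hence the complement of $p(U)$ in $M/\hat{v}$ is precisely $p(\partial U)$, a set of closed-orbit classes with empty interior, and since $p(U)$ is connected these exceptional classes can only sit as the endpoint(s) of the interval; every interior point is then a torusoid class, hence two dimensional. (Alternatively, density follows from Hurewicz's dimension-lowering theorem: an open subset of $M$ decomposed into closed orbits with a $1$-dimensional quotient would have dimension at most two.) So your argument is repairable, and the repair is already contained in the proof of Lemma \ref{lem52} that you cite for connectedness, but as written the key deduction does not go through.
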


\begin{proof}
Suppose that each nontrivial minimal set is a torusoid. 
Since $v$ is nontrivial, 
there is a minimal set. 
If the minimal set is two dimensional, 
then 
Lemma \ref{lem52} implies that 1). 
Otherwise 
we may assume that 
there are no two dimensional minimal sets. 
Since $v$ is nontrivial, 
there is a periodic orbit. 
By flow box theorem, 
this has a neighbourhood without singularities. 
Thus there are infinitely many periodic orbits. 
By Lemma \ref{lem:006}, 
we have that 2) holds.
\end{proof}

%
%


\end{document}